\documentclass{article}
\usepackage{amsmath,amssymb,amsthm}
\usepackage{appendix}

\usepackage{xcolor}

\textwidth 160mm \oddsidemargin 0mm \topmargin -10mm\textheight23cm

\newtheorem{theorem}{Theorem}
\newtheorem{proposition}{Proposition}
\newtheorem{corollary}{Corollary}
\newtheorem{definition}{Definition}
\theoremstyle{remark}
\newtheorem{remark}{Remark}
\newtheorem{example}{Example}

\def\circ{\mathrm{circ}}

\title{Circulant matrices with orthogonal rows\\and off-diagonal entries of absolute value $1$}
\author{Daniel Uzc\'{a}tegui Contreras, Dardo Goyeneche, Ond\v{r}ej Turek, Zuzana V\'{a}clav\'{i}kov\'{a}}



\begin{document}

\maketitle

\begin{abstract}
It is known that a real symmetric circulant matrix with diagonal entries $d\geq0$, off-diagonal entries $\pm1$ and orthogonal rows exists only of order $2d+2$ (and trivially of order $1$) [Turek and Goyeneche 2019].
In this paper we consider a complex Hermitian analogy of those matrices. That is, we study the existence and construction of Hermitian circulant matrices having orthogonal rows, diagonal entries $d\geq0$ and any complex entries of absolute value $1$ off the diagonal. As a particular case, we consider matrices whose off-diagonal entries are 4th roots of unity; we prove that the order of any such matrix with $d$ different from an odd integer is $n=2d+2$. We also discuss a similar problem for symmetric circulant matrices defined over finite rings $\mathbb{Z}_m$. As an application of our results, we show a close connection to mutually unbiased bases, an important open problem in quantum information theory.
\end{abstract}

\section{Introduction}

A \emph{circulant matrix} is a square matrix of order $n\in\mathbb{N}$ of the form
\begin{equation}\label{C}
C=\left(\begin{array}{ccccc}
c_{0} & c_{1} & \cdots & c_{n-2} & c_{n-1} \\
c_{n-1} & c_{0} & c_{1} &  & c_{n-2} \\
\vdots & c_{n-1} & c_{0} & \ddots & \vdots \\
c_{2} &  & \ddots & \ddots & c_{1} \\
c_{1} & c_{2} & \cdots & c_{n-1} & c_{0}
\end{array}\right)\,.
\end{equation}
The first row, $(c_0,c_1,\ldots,c_{n-1})$, is called the \emph{generator} of $C$. In this work we will denote a circulant matrix of order $n$ having generator $(c_0,c_1,\ldots,c_{n-1})$ by $\circ_n(c_0,c_1,\ldots,c_{n-1})$.

Let $C= \circ_n(c_0,c_1,\ldots,c_{n-1})$ be a complex circulant matrix of order $n\geq2$ satisfying the following conditions:
\begin{equation}\label{Conditions}
\left\{
\begin{array}{l}
c_0=d\geq 0; \\
|c_j|=1 \quad\text{for all}\; j=1,\ldots,n-1; \\
CC^*=(d^2+n-1)I.
\end{array}
\right.
\end{equation}
The aim of this paper is to examine possible orders of a matrix $C$ obeying the above conditions for a given $d$. In other words, we shall examine values of $d$ that are allowed on the main diagonal of matrices $C$ of a given order $n$.\medskip

In paper \cite{TG}, real matrices satisfying \eqref{Conditions} were studied. In particular, a complete solution was obtained for the case of \emph{symmetric} matrices. It was proved that the order of a symmetric matrix $C$ is related with the diagonal value $d$ by the formula $n=2d+2$. In the present work we discuss extensions of the results in two directions:
\begin{itemize}
\item complex Hermitian matrices;
\item symmetric matrices with entries defined over finite rings $\mathbb{Z}_m$.
\end{itemize}

Note that a complex \emph{non-Hermitian} matrix $C$ satisfying \eqref{Conditions} with a given $d\geq0$ trivially exists for every $n\leq2d+2$. Indeed, consider $C = \circ_n(d,-\mathrm{e}^{\mathrm{i}\alpha},-\mathrm{e}^{\mathrm{i}\alpha},\ldots,-\mathrm{e}^{\mathrm{i}\alpha})$. Then $CC^*$ is a circulant matrix with generator
$$
(d^2+n-1,n-2-2d\cos\alpha,n-2-2d\cos\alpha,\ldots,n-2-2d\cos\alpha);
$$
so every $n\leq 2d+2$ allows to set $\alpha=\arccos\frac{n-2}{2d}$ to obtain a matrix $C$ satisfying \eqref{Conditions}. On the other hand, the question becomes hard for orders $n>2d+2$.

Our object of study has a close relation with \textit{polyphase sequences} \cite{heimiler1961,mow1993}, that is, \textit{n}-tuple sequences of complex numbers having the form $\omega^k$, where $\omega=\exp{(\frac{2\pi\mathrm{i}}{n})}$ is the main $n$th root of the unity. Among the entire set of polyphase sequences there is a relevant subset given by \textit{perfect autocorrelation sequences}, which are characterized by having zero autocorrelation function \cite{heimiler1961}. Let us recall that for a sequence $\mathbf{a} = (a_0, a_1,...,a_{n-1})$, whose elements satisfy $a_i = a_{i + \nu}$, the autocorrelation function $\theta_{\mathbf{a}}(\nu)$ is defined as\\ 

\begin{equation}
    \theta_{\mathbf{a}}(\nu) = \sum_{i = 0}^{n - 1} a_i a^{*}_{i + \nu },
\end{equation}

\noindent where $\nu$ is called the shift or period and  $i + \nu$ is computed modulo $n$ \cite{blake2014}. In a sense, the autocorrelation function quantifies how much a sequence differs from its cyclic shifts of entries. Polyphase sequences having perfect autocorrelation are one-to-one connected with generators $g$ of matrices $C$ having order $n$ satisfying conditions (\ref{Conditions}) for the special case of $d=1$ and $n$th roots of the unity in its entries. These sequences have practical applications in several fields, for example in communication and radar systems \cite{farnertt1990,itapov2005,milewski1983,xu2011}. Therefore, construction of perfect sequences of length $n$ has been extensively studied (cf. \cite{chu1972,heimiler1961,LF2004,milewski1983} and references therein).

\section{Preliminaries}

A circulant matrix $C$ of order $n$ has normalized eigenvectors $v_0,v_1,\ldots,v_{n-1}$ given as
$$
v_k=\frac{1}{\sqrt{n}}\left(1,\omega^k,\omega^{2k},\ldots,\omega^{(n-1)k}\right)^T\,,
$$
where $\omega=\mathrm{e}^{\frac{2\pi\mathrm{i}}{n}}$.
The associated eigenvalues are
\begin{equation}\label{eigenvalues}
\lambda_k=c_0+c_{1}\omega^k+c_{2}\omega^{2k}+\cdots+c_{n-1}\omega^{(n-1)k}\,,
\end{equation}
where $(c_0,c_1,\ldots,c_{n-1})$ is the generator of $C$.

The vectors $(c_0,c_1,\ldots,c_{n-1})^T$ and $(\lambda_0,\lambda_1,\ldots,\lambda_{n-1})^T$ are related by the discrete Fourier transform; the inverse transform gives the generator in terms of the eigenvalues as follows:
\begin{equation}\label{invDFT}
c_j=\frac{1}{n}\left(\lambda_0+\lambda_1\omega^{-j}+\lambda_2\omega^{-2j}+\cdots+\lambda_{n-1}\omega^{-(n-1)j}\right).
\end{equation}

Throughout the paper, we will index the rows and columns of the matrix $C$ by integers from $0$ to $n-1$ (instead of from $1$ to $n$).

\section{Hermitian solutions over $\mathbb{C}$}

First of all, let us observe that for each $n\geq2$ there exists a Hermitian circulant matrix satisfying \eqref{Conditions} with main diagonal $d=\frac{n}{2}-1$.

\begin{proposition}\label{Prop. trivial}
A Hermitian circulant matrix
$$
C=\circ_n\left(\frac{n}{2}-1,-\omega^{\nu},-\omega^{2\nu},\ldots,-\omega^{(n-1)\nu}\right)
$$
where $\omega=\mathrm{e}^{2\pi \mathrm{i}/n}$ satisfies conditions \eqref{Conditions} for every $n\geq2$ and every $\nu\in\mathbb{Z}_n$. In particular, the choice $\nu=0$ gives a real symmetric solution
$$
C= \circ_n\left(\frac{n}{2}-1,-1,-1,\ldots,-1\right).
$$
\end{proposition}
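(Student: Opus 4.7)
The plan is to verify the three conditions in \eqref{Conditions} directly, plus Hermiticity of $C$, with the last condition handled via the eigenvalue formula \eqref{eigenvalues} rather than by computing $CC^*$ entry-by-entry.

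First I would check Hermiticity. A circulant matrix $\circ_n(c_0,\ldots,c_{n-1})$ is Hermitian if and only if $c_j=\overline{c_{n-j}}$ for every $j$. Here $c_j=-\omega^{j\nu}$, so $\overline{c_{n-j}}=-\omega^{-(n-j)\nu}=-\omega^{j\nu}$, using $\omega^n=1$; Hermiticity follows. The unimodularity condition $|c_j|=1$ for $j\geq 1$ is immediate since $c_j=-\omega^{j\nu}$. The condition $c_0=d\geq 0$ is clear because $c_0=n/2-1\geq 0$ for $n\geq 2$.

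The core of the argument is the orthogonality relation $CC^*=(d^2+n-1)I$. Since $C$ is Hermitian (in particular normal) and circulant, it suffices by the spectral decomposition to show $|\lambda_k|^2=d^2+n-1$ for every $k\in\{0,1,\ldots,n-1\}$. Using \eqref{eigenvalues},
\begin{equation*}
\lambda_k=\frac{n}{2}-1-\sum_{j=1}^{n-1}\omega^{j(\nu+k)}.
\end{equation*}
Splitting into the two natural cases: if $\nu+k\equiv 0\pmod n$, then each summand equals $1$ and the sum is $n-1$, giving $\lambda_k=-n/2$; otherwise the geometric sum $\sum_{j=0}^{n-1}\omega^{j(\nu+k)}$ vanishes, so $\sum_{j=1}^{n-1}\omega^{j(\nu+k)}=-1$ and $\lambda_k=n/2$. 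In both cases $|\lambda_k|^2=n^2/4$.

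It remains to check the arithmetic identity $n^2/4=d^2+n-1$ with $d=n/2-1$, which is routine: $(n/2-1)^2+n-1=n^2/4-n+1+n-1=n^2/4$. The real symmetric specialization at $\nu=0$ is then automatic from the formula $c_j=-\omega^{0}=-1$. There is really no hard step here; the only thing to be careful about is the case split when computing $\lambda_k$, which is precisely the place where the value $d=n/2-1$ is forced (any other choice of $c_0$ would produce two distinct eigenvalue magnitudes and destroy the scalar form of $CC^*$).
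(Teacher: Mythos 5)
Your proposal is correct and follows essentially the same route as the paper: both compute the eigenvalues $\lambda_k$ via \eqref{eigenvalues}, reduce the sum to a geometric series that forces $\lambda_k=\pm\frac{n}{2}$ according to whether $\nu+k\equiv0\pmod n$, and then check $\left(\frac{n}{2}\right)^2=d^2+n-1$. The only difference is that you explicitly verify Hermiticity via $c_j=\overline{c_{n-j}}$, which the paper leaves implicit; that is a harmless (indeed welcome) addition.
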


\begin{proof}
From equation \eqref{eigenvalues}, eigenvalues of $C$ can be written in term of entries $c_j$ as follows:

\begin{equation}
\lambda_k = \sum_{j = 0}^{n-1}c_j \omega^{jk}= \frac{n}{2}-1 + \sum_{j = 1}^{n-1}(-\omega^{j\nu})\omega^{jk} = \frac{n}{2}-1 - \sum_{j = 0}^{n-1}\omega^{j(\nu+k)} + 1=\frac{n}{2}-n\,\delta_{\nu+k,0}.
\label{eigens}
\end{equation}
This equation becomes either $\lambda_k = \frac{n}{2}$ or  $\lambda_{k} = \frac{n}{2} - n = -\frac{n}{2}$ for $\nu+k \neq 0$ or $\nu+k=0$, respectively.
Thus $CC^*=\left(\frac{n}{2}\right)^2 I = \left(\left(\frac{n}{2}-1\right)^2+n-1\right)^2 I$, so $C$ satisfies conditions \eqref{Conditions} for every $\nu\in\mathbb{Z}_n$.
\end{proof}

Let us now examine the situation of a general $d\geq0$. We will distinguish matrices of even and odd orders.

\subsection{Matrices $C$ of even orders}\label{even orders}

\begin{proposition}\label{Prop. even n}
If a Hermitian circulant matrix $C$ of an even order $n$ satisfies \eqref{Conditions}, then
\begin{itemize}
\item[(i)] there exists a positive integer $k\leq\frac{n}{2\sqrt{n-1}}$ such that
\begin{equation}\label{d2}
\sqrt{d^2+n-1}=\frac{n}{2k};
\end{equation}
\item[(ii)] $d$ is rational;
\item[(iii)] $d\leq\frac{n}{2}-1$.
\end{itemize}
\end{proposition}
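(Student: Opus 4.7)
The starting observation is that Hermiticity combined with the orthogonality condition pins the spectrum of $C$ to two values. Since $C^{*}=C$, the identity $CC^{*}=(d^{2}+n-1)I$ reads $C^{2}=r^{2}I$ with $r:=\sqrt{d^{2}+n-1}$, so every eigenvalue satisfies $\lambda_{\ell}\in\{+r,-r\}$. My plan is to extract arithmetic information about $r$ (and hence $d$) by feeding this dichotomy into the inverse Fourier formula \eqref{invDFT} and exploiting the constraint $|c_{n/2}|=1$, which is meaningful precisely because $n$ is even.

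Set $S=\{\ell : \lambda_{\ell}=+r\}$ and $p=|S|$. For $j\neq 0$ the relation $\sum_{\ell}\omega^{-j\ell}=0$ lets me absorb the $-r$ eigenvalues into a single sum, yielding $c_{j}=\frac{2r}{n}\sum_{\ell\in S}\omega^{-j\ell}$. I would then specialize to $j=n/2$, where $\omega^{-(n/2)\ell}=(-1)^{\ell}$, giving $c_{n/2}=\frac{2r}{n}\sum_{\ell\in S}(-1)^{\ell}$. The inner sum is an integer while $|c_{n/2}|=1$, so $\tfrac{n}{2r}$ equals the absolute value of that integer and is therefore a positive integer; denoting it by $k$ as in the statement, we obtain $\sqrt{d^{2}+n-1}=n/(2k)$. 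The upper bound $k\leq n/(2\sqrt{n-1})$ is then immediate from $r\geq\sqrt{n-1}$, which proves (i).

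For (ii) I would use the trace: $nd=\sum_{\ell}\lambda_{\ell}=r(2p-n)$ gives $d=(2p-n)/(2k)\in\mathbb{Q}$. For (iii), part (i) with $k\geq 1$ yields $r\leq n/2$, hence $d^{2}=r^{2}-(n-1)\leq(n/2-1)^{2}$, and the nonnegativity of $d$ closes the argument. The only delicate step is the quantization, i.e.\ the passage from $|c_{n/2}|=1$ to integrality of $n/(2r)$; this is where the even-order hypothesis is used in an essential way, and the fact that $n/2$ ceases to be an index when $n$ is odd is presumably why the next subsection must treat that case by a different route.
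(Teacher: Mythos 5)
Your proof is correct, and it reaches the paper's key identity by a different route. The paper follows Craigen's trick: it forms the auxiliary matrix $M=CP$, where $P=\begin{pmatrix}0&I\\I&0\end{pmatrix}$, observes that $M$ is a Hermitian circulant matrix with diagonal entry $c_{n/2}$ and $M^2=(d^2+n-1)I$, and then equates the trace of $M$ with the sum of its eigenvalues to get $\left|\mu-\tfrac{n}{2}\right|\sqrt{d^2+n-1}=\tfrac{n}{2}$. You instead evaluate the inverse Fourier formula \eqref{invDFT} at $j=n/2$, obtaining $c_{n/2}=\tfrac{2r}{n}\sum_{\ell\in S}(-1)^{\ell}$ and reading off integrality of $n/(2r)$ directly from $|c_{n/2}|=1$. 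The two computations produce literally the same quantity --- $nc_{n/2}=\sum_{\ell}(-1)^{\ell}\lambda_{\ell}$, and your integer $t=\sum_{\ell\in S}(-1)^{\ell}$ equals the paper's $\mu-\tfrac{n}{2}$ --- but your derivation is more economical: it needs no auxiliary matrix, no verification that $M$ is Hermitian circulant, and no separate argument that $c_{n/2}$ is real, since $\tfrac{2r}{n}t$ is manifestly real. The paper's version has the minor advantage of making the spectral multiplicities of the shifted matrix explicit, which is the form in which the idea is usually cited from Craigen's work, but nothing downstream in the paper depends on that extra structure. Your handling of (ii) via the trace of $C$ itself and of (iii) via $k\geq 1$ matches the paper exactly. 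One small point worth making explicit in a written version: the integer $\sum_{\ell\in S}(-1)^{\ell}$ cannot vanish, since that would force $c_{n/2}=0$, contradicting $|c_{n/2}|=1$; this is what makes $k$ a \emph{positive} integer.
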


\begin{proof}
Since $C$ is Hermitian and satisfies $C^2=(d^2+n-1)I$, the eigenvalues of $C$ are $\sqrt{d^2+n-1}$ and $-\sqrt{d^2+n-1}$. Let us denote their multiplicities by $\nu$ and $n-\nu$, respectively. The sum of eigenvalues is equal to the trace of $C$, i.e.,
\begin{equation}\label{sum=trace C}
\nu\sqrt{d^2+n-1}-(n-\nu)\sqrt{d^2+n-1}=nd.
\end{equation}
Hence
\begin{equation}\label{sum=trace C 2}
\left(\nu-\frac{n}{2}\right)\sqrt{d^2+n-1}=\frac{n}{2}d.
\end{equation}
Now we will use an idea from~\cite[proof of Theorem~8]{Cr}. Since $C$ is Hermitian, its generator has the form $(d,c_1,\ldots,c_{\frac{n}{2}-1},c_{\frac{n}{2}},\overline{c_{\frac{n}{2}-1}},\ldots,\overline{c_1})$, where $c_{\frac{n}{2}}\in\mathbb{R}$. Let $M$ be a circulant matrix with the generator
$$
(c_{\frac{n}{2}},\overline{c_{\frac{n}{2}-1}},\ldots,\overline{c_1},d,c_1,\ldots,c_{\frac{n}{2}-1}).
$$
Note that $M$ is Hermitian and satisfies $M=CP$, where $P=\begin{pmatrix}0&I\\I&0\end{pmatrix}$ is a permutation matrix. Therefore,
$$
M^2=MM^*=(CP)(CP)^*=CPP^*C^*=CC^*=(d^2+n-1)I.
$$
Consequently, $M$ has eigenvalues $\sqrt{d^2+n-1}$ and $-\sqrt{d^2+n-1}$; we denote their multiplicities by $\mu$ and $n-\mu$, respectively. The sum of eigenvalues of $M$ must be equal to the trace of $M$, so
\begin{equation}\label{sum=trace M}
\mu\sqrt{d^2+n-1}-(n-\mu)\sqrt{d^2+n-1}=nc_{\frac{n}{2}}.
\end{equation}
Recall that $c_{\frac{n}{2}}$ is real due to the hermiticity of $M$. At the same time $|c_{\frac{n}{2}}|=1$ by \eqref{Conditions}. Hence $c_{\frac{n}{2}}=\pm1$, and equation \eqref{sum=trace M} implies
\begin{equation}\label{sum=trace M 2}
\left|\mu-\frac{n}{2}\right|\sqrt{d^2+n-1}=\frac{n}{2}.
\end{equation}
We denote $k:=\left|\mu-\frac{n}{2}\right|$. By definition of $\mu$, $k$ is an integer from $\left[0,\frac{n}{2}\right]$. The value $k=0$ is forbidden by \eqref{sum=trace M 2}. Values $k>\frac{n}{2\sqrt{n-1}}$ would imply $\sqrt{d^2+n-1}<\sqrt{n-1}$, which is impossible. So $1\leq k\leq\frac{n}{2\sqrt{n-1}}$.

(ii)\; Since $\sqrt{d^2+n-1}=\frac{n}{2k}\in\mathbb{Q}$, equation \eqref{sum=trace C 2} gives $d\in\mathbb{Q}$.

(iii)\; Applying statement (i), one gets
$$
d=\sqrt{\left(\frac{n}{2k}\right)^2-n+1}\leq\sqrt{\left(\frac{n}{2}\right)^2-n+1}=\frac{n}{2}-1.
$$
\end{proof}

Note that the value $k=1$ in Proposition \ref{Prop. even n}(i) corresponds to $d=\frac{n}{2}-1$, for which a Hermitian circulant matrix $C$ always exists -- see Proposition~\ref{Prop. trivial}.

The statement of Proposition~\ref{Prop. even n} can be strengthened in the special case when $d$ is integer:

\begin{proposition}\label{Prop. integer d}
Let a Hermitian circulant matrix $C$ of an even order $n$ satisfies \eqref{Conditions} with an integer $d$. Let us denote
\begin{equation}\label{ell}
\ell:=\sqrt{d^2+n-1}.
\end{equation}
Then we have:
\begin{itemize}
\item[(i)] $\ell$ is integer (in other words, $d^2+n-1$ is a perfect square).
\item[(ii)] $\ell\mid \frac{n}{2}$.
\item[(iii)] $\ell\mid (d^2-1)$. In particular, if $d$ is odd, then $\ell\mid \frac{d^2-1}{2}$.
\item[(iv)] If $n-1$ is prime, then $d=\frac{n}{2}-1$.
\end{itemize}
\end{proposition}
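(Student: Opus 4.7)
The whole proposition rides on Proposition~\ref{Prop. even n}(i), which already gives us the key identity $\ell=\sqrt{d^2+n-1}=n/(2k)$ for some positive integer $k$. My plan is to exploit this together with $\ell^2=d^2+n-1$ and treat the four items in order, each reducing to a short arithmetic observation.

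For (i), I would argue that $\ell=n/(2k)$ is a rational number whose square $\ell^2=d^2+n-1$ is an integer (because $d\in\mathbb{Z}$), and a rational whose square is an integer is itself an integer; hence $\ell\in\mathbb{Z}$. Item (ii) is then immediate: rewriting $\ell=n/(2k)$ as $n/2=k\ell$ with both $k$ and $\ell$ positive integers shows $\ell\mid n/2$.

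For (iii), the identity $d^2-1=\ell^2-n=\ell^2-2k\ell=\ell(\ell-2k)$ gives $\ell\mid d^2-1$ at once. To sharpen this when $d$ is odd, I would use a parity argument: $d^2$ is odd and $n-1$ is odd (since $n$ is even), so $\ell^2=d^2+n-1$ is even, forcing $\ell$ to be even. Then $\ell-2k$ is also even, so $(d^2-1)/2=\ell\cdot(\ell-2k)/2$ expresses $(d^2-1)/2$ as an integer multiple of $\ell$, yielding $\ell\mid(d^2-1)/2$.

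For (iv), I would factor the defining equation as $(\ell-d)(\ell+d)=\ell^2-d^2=n-1$. Since $d\geq 0$ and $\ell^2=d^2+n-1>d^2$ (using $n\geq 2$), both factors are positive integers with $\ell-d\geq 1$. If $n-1$ is prime, the only such factorization is $\ell-d=1$ and $\ell+d=n-1$, which yields $d=(n-2)/2=n/2-1$ as claimed. The only subtle step in the whole argument is the parity observation used to upgrade (iii) to the odd-$d$ version; everything else is bookkeeping on the relation $\ell=n/(2k)$ established earlier.
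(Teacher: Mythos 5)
Your proof is correct and follows essentially the same route as the paper's: all four items are deduced from the relation $\ell=n/(2k)$ of Proposition~\ref{Prop. even n}(i), with the same integrality, divisibility, parity, and prime-factorization arguments (your $d^2-1=\ell(\ell-2k)$ is just an explicit form of the paper's $\ell\mid n$ and $\ell\mid\ell^2$ step). No gaps.
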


\begin{proof}
(i)\; If $d$ is integer, then $\ell=\sqrt{d^2+n-1}$ is obviously either integer or irrational. But $\ell$ cannot be irrational by Proposition \ref{Prop. even n}(i); so $\ell$ is integer.

(ii)\; Since $\ell$ is integer by (i), the statement $\ell\mid \frac{n}{2}$ immediately follows from Proposition \ref{Prop. even n}(i).

(iii)\; Equation \eqref{ell} implies $d^2-1=\ell^2-n$.
Since $\ell\mid n$ by (ii) and obviously $\ell\mid\ell^2$, we have $\ell\mid (d^2-1)$.
In particular, if $d$ is odd, then $d^2+n-1$ is even, so $\ell$ is even. Thus $2\ell\mid \ell^2$. At the same time $2\ell\mid n$ by (ii), so $2\ell\mid(\ell^2-n)$. Hence $2\ell\mid(d^2-1)$ due to $d^2-1=\ell^2-n$.

(iv)\; Statement (i) implies that $n-1=\ell^2-d^2=(\ell-d)(\ell+d)$ for some integer $\ell\geq0$.
If $n-1$ is prime, then necessarily $\ell-d=1$; hence $\ell+d=\ell-d+2d=1+2d$, and so $n-1=1\cdot(1+2d)$. Consequently, $n=2+2d$, thus $d=\frac{n}{2}-1$.
\end{proof}

Proposition~\ref{Prop. integer d} has a series of consequences, which will be formulated below as Corollaries~\ref{conference}, \ref{Hadamard} and \ref{prime}.

\begin{definition}
A complex square matrix $C$ of order $n$ is called a complex \emph{conference matrix} if all its diagonal entries are $0$, its off-diagonal entries are of absolute value $1$, and $CC^*=(n-1)I$.
\end{definition}

\begin{corollary}\label{conference}
A Hermitian circulant conference matrix of an even order $n\geq2$ exists only for $n=2$.
\end{corollary}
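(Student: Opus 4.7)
The plan is to apply Proposition~\ref{Prop. integer d} directly, exploiting the fact that for a conference matrix we have $d=0$, which is an integer, so the full strength of that proposition is available. First I would observe that a Hermitian circulant conference matrix of even order $n$ is precisely a Hermitian circulant $C$ of even order satisfying \eqref{Conditions} with $d=0$. Hence Proposition~\ref{Prop. integer d} applies with $d=0$, giving $\ell=\sqrt{d^{2}+n-1}=\sqrt{n-1}$.

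The key step is part~(iii) of that proposition, which says $\ell\mid(d^{2}-1)$. With $d=0$ this becomes $\ell\mid(-1)$, and since $\ell$ is a positive integer by part~(i), we are forced to have $\ell=1$. Substituting back into $\ell=\sqrt{n-1}$ yields $n-1=1$, i.e.\ $n=2$.

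Finally, to confirm that $n=2$ is actually attained, I would exhibit the trivial example $C=\circ_{2}(0,1)$, which is Hermitian, circulant, has off-diagonal entries of absolute value $1$, and satisfies $CC^{*}=I=(n-1)I$. The only possible obstacle is a purely cosmetic one, namely checking that invoking Proposition~\ref{Prop. integer d} with the borderline integer value $d=0$ is legitimate, but the hypotheses of that proposition impose no positivity on $d$ beyond $d\geq 0$ (inherited from \eqref{Conditions}), so the argument goes through without modification.
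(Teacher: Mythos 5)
Your argument is correct and follows exactly the paper's own route: apply Proposition~\ref{Prop. integer d}(iii) with $d=0$ to get $\ell\mid(-1)$, hence $\ell=1$ and $n=2$ from $\ell=\sqrt{n-1}$. The concluding example $\circ_2(0,1)$ is a harmless extra confirming that $n=2$ is attained.
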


\begin{proof}
Let a conference matrix $C$ satisfy the assumptions, i.e., $C$ is a Hermitian circulant matrix obeying \eqref{Conditions} with $d=0$ and an even $n\geq2$. Then Proposition~\ref{Prop. integer d}(iii) implies $\ell\mid-1$; so $|\ell|=1$. Hence, by \eqref{ell}, we have $\sqrt{n-1}=1$; thus $n=2$.
\end{proof}

\begin{definition}
A complex square matrix $H$ of order $n$ is called a complex \emph{Hadamard matrix} if all its entries are of absolute value $1$ and $HH^*=nI$.
\end{definition}

\begin{corollary}\label{Hadamard}
A Hermitian circulant Hadamard matrix of an even order $n\geq2$ exists only if $n$ is a square of an even integer.
\end{corollary}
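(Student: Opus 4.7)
The plan is to apply Proposition~\ref{Prop. integer d} directly with the value $d=1$. First I would observe that a Hermitian circulant Hadamard matrix $H$ of order $n$ fits the framework of conditions~\eqref{Conditions}: all its entries have absolute value $1$, and $H$ being Hermitian forces the diagonal entries to be real of modulus $1$. Since $d=c_0\geq 0$ by the conventions of~\eqref{Conditions}, we must have $d=1$, and the Hadamard condition $HH^*=nI$ coincides with $CC^*=(d^2+n-1)I=nI$. Hence the entire machinery of Section~3 applies with $d=1$, an integer.

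Next I would invoke Proposition~\ref{Prop. integer d}(i), which guarantees that
\begin{equation*}
\ell=\sqrt{d^2+n-1}=\sqrt{n}
\end{equation*}
is a (positive) integer; in particular $n=\ell^2$ must be a perfect square. This already cuts the list of admissible orders drastically, but it is not yet enough: we still need to rule out $\ell$ being odd.

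To do so, I would invoke Proposition~\ref{Prop. integer d}(ii), which states $\ell\mid\frac{n}{2}$. Substituting $n=\ell^2$, this divisibility reads $\ell\mid\frac{\ell^2}{2}$, i.e.\ $\frac{\ell}{2}\in\mathbb{Z}$. Hence $\ell$ must be even, and therefore $n=\ell^2$ is the square of an even integer, as claimed.

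There is essentially no obstacle here: the corollary is a two-line deduction from Proposition~\ref{Prop. integer d}, the only subtlety being the initial identification of a Hermitian circulant Hadamard matrix with the $d=1$ case of~\eqref{Conditions}. Note that item~(iii) of Proposition~\ref{Prop. integer d} contributes nothing new in this setting, since $d^2-1=0$ makes the divisibility condition trivially satisfied; the parity constraint comes entirely from combining (i) and (ii).
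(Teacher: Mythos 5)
Your proof is correct and follows essentially the same route as the paper, which simply applies Proposition~\ref{Prop. integer d} with $d=1$ to get $n=\ell^2$. The only cosmetic difference is that you extract the evenness of $\ell$ from item~(ii) ($\ell\mid\frac{n}{2}$), whereas the paper gets it for free from item~(i) alone, since $n=\ell^2$ is even by hypothesis and the square of an odd integer is odd.
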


\begin{proof}
The statement follows immediately from Proposition \ref{Prop. integer d}(i) with $d=1$.
\end{proof}

\begin{remark}\label{rem. CK}
Corollary~\ref{Hadamard} concerns Hermitian circulant complex Hadamard matrices with off-diagonal entries being any complex units (i.e., $|c_j|=1$ for all $j=1,\ldots,n-1$). Let us note that if the off-diagonal entries are restricted to 4th roots of unity, i.e., $c_j\in\{1,-1,\mathrm{i},-\mathrm{i}\}$ for all $j=1,\ldots,n$, it is known that no such matrix of order $n>4$ exists (see Craigen and Kharaghani \cite{CK}). Matrices $C$ with off-diagonal entries from $\{1,-1,\mathrm{i},-\mathrm{i}\}$ and general diagonal values $d\geq0$ will be further discussed in Section~\ref{section 4}.
\end{remark}

\begin{corollary}\label{prime}
Consider a Hermitian circulant matrix $C$ satisfying (\ref{Conditions}) with an integer $d$. If $n/2$ is prime, then $n=2d+2$.
\end{corollary}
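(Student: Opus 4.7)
The plan is to apply Proposition~\ref{Prop. integer d} directly. First I would note that the hypothesis ``$n/2$ is prime'' forces $n$ to be even with $n\geq 4$, so that proposition applies. Setting $\ell := \sqrt{d^2+n-1}$, part (i) yields $\ell \in \mathbb{Z}_{>0}$ and part (ii) yields $\ell \mid n/2$.

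Since $n/2$ is prime, its only positive divisors are $1$ and $n/2$, so $\ell\in\{1,n/2\}$. The value $\ell=1$ is untenable because it would force $d^2=2-n\leq -2$. Hence $\ell=n/2$, and squaring then gives $d^2=(n/2)^2-n+1=(n/2-1)^2$; since $d\geq 0$ this forces $d=n/2-1$, equivalently $n=2d+2$, as claimed.

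I do not expect any real obstacle here: the substantive work has already been absorbed into Proposition~\ref{Prop. integer d}(i)--(ii), and the corollary reduces to a one-line case analysis exploiting the fact that the divisor lattice of a prime has only two elements, one of which is immediately ruled out by positivity of $d^2+n-1-1$.
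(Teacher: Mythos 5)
Your proof is correct and follows essentially the same route as the paper: invoke Proposition~\ref{Prop. integer d}(i)--(ii) to get $\ell\mid n/2$ with $\ell$ a positive integer, use primality to reduce to $\ell\in\{1,n/2\}$, rule out $\ell=1$ since $d^2+n-1>1$, and conclude $d=n/2-1$. No gaps.
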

\begin{proof}
If $d$ is integer and $n/2=p$ is a prime number, then from Proposition \ref{Prop. integer d}(ii) we have $\ell=1$ or $\ell=p$, where $\ell=\sqrt{d^2+2p-1}$. The case $\ell=1$ cannot occur, as it leads to $1=\sqrt{d^2+2p-1}$, which is impossible for any prime $p$. So $\ell=p=n/2$, which implies $d^2 = \left(\frac{n}{2}-1\right)^2$; hence $d=\frac{n}{2}-1$.
\end{proof}

Now we will extend the result of Corollary~\ref{prime} in two ways (Propositions \ref{two primes} and \ref{prime power}).

\begin{proposition}\label{two primes}
Let $C$ be a Hermitian circulant matrix satisfying \eqref{Conditions} with an integer $d$. If $n/2$ is a product of two primes, then $n=2d+2$.
\end{proposition}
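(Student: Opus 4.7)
The plan is to apply Proposition~\ref{Prop. integer d} and then run a case analysis on the value of $\ell:=\sqrt{d^2+n-1}$. Writing $n/2=pq$ with primes $p\leq q$ (allowing $p=q$), parts (i)--(ii) of that proposition force $\ell$ to be a positive integer dividing $pq$, hence $\ell\in\{1,p,q,pq\}$. Each candidate determines $d^2=\ell^2-2pq+1$ uniquely, so it suffices to show that only $\ell=pq$ is admissible; that value gives $d^2=(pq-1)^2$ and hence $d=n/2-1$, as required.

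For $\ell=1$ one gets $d^2=2-2pq<0$, and for $\ell=p$ one gets $d^2=p^2-2pq+1\leq 1-p^2<0$ using $q\geq p\geq 2$; these two subcases are dispatched by a sign argument alone. The remaining subcase $\ell=q$ with $p<q$ is the substantive one. Note that the divisibility relation $\ell\mid d^2-1$ of Proposition~\ref{Prop. integer d}(iii) is automatic here (as $\ell\mid n$), so something more is needed: my plan is to combine a size bound on $d$ with a modular argument modulo $q$.

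Concretely, for $\ell=q$ I would argue as follows. From $d^2=q^2-2pq+1$, the assumption $d\geq 1$ gives $q\geq 2p$, while $d^2<q^2$ (which holds because $2pq>1$) gives $d<q$; thus $0<d<q$. Writing $d^2-1=q(q-2p)$ and using primality of $q$, either $q\mid d-1$ or $q\mid d+1$; the bound $0<d<q$ then forces $d=1$ or $d=q-1$ respectively. Substituting back into $d^2=q^2-2pq+1$ yields $q=2p$ in the first case (impossible, as $2p$ is not prime for $p\geq 2$) and $p=1$ in the second (also impossible). This contradicts $\ell=q$ and completes the case analysis. The $\ell=q$ step is the only real obstacle: every other divisor of $n/2$ is eliminated by an elementary sign argument, and the modular-plus-size analysis is what upgrades the proof of Corollary~\ref{prime} (where the only proper divisor to rule out was $1$) to handle the extra proper divisor that appears when $n/2$ has two prime factors.
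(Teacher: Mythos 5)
Your proof is correct, and while it follows the paper's overall skeleton---Proposition~\ref{Prop. integer d}(ii) forces $\ell\in\{1,p,q,pq\}$, the value $\ell=pq$ yields $n=2d+2$, and $\ell=1$ together with $\ell$ equal to the \emph{smaller} prime are excluded because $\ell^2-2pq+1<0$---you handle the one substantive subcase by a genuinely different argument. For $\ell$ equal to the larger prime, the paper introduces $k:=p-q-d>0$ (with $p$ implicitly playing the role of the larger prime there), derives the identity $(q+k+1)(q+k-1)=2kp$, splits $2k=ab$ with $a\mid(q+k+1)$ and $b\mid(q+k-1)$, and uses primality of $p$ to force one cofactor to equal $1$, a contradiction. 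You instead read $d^2-1=q(q-2p)$ as the congruence $q\mid(d-1)(d+1)$, combine it with the elementary bounds $0<d<q$ to pin down $d\in\{1,q-1\}$, and substitute back to obtain $q=2p$ or $p=1$, both absurd. Your route is shorter, avoids the divisor-splitting step (which silently uses the lemma that $m\mid xy$ implies $m=ab$ with $a\mid x$, $b\mid y$), and keeps the easy/hard roles of the two prime divisors consistent with the normalization $p\leq q$; in the paper's write-up the sign inequality claimed for $\ell=q$, namely $q^2-2pq+1\leq 1-q^2$, in fact requires $q\leq p$, so the two subcases have their labels implicitly swapped, an ambiguity your version avoids. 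One cosmetic point: $d\geq1$ is not an assumption of the proposition, but in your subcase $d=0$ would give $q\mid 1$ from $d^2-1=q(q-2p)$, so the bound $0<d<q$ is justified with one extra line. The paper's factorization identity is the tool one would reach for if the problematic divisor were composite; your modular-plus-size argument exploits that here it is itself prime, which is exactly what makes the two-prime case tractable.
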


\begin{proof}
Let $n/2=pq$ for $p,q$ being primes. Then \eqref{ell} gives
\begin{equation}\label{ell primes}
\ell=\sqrt{d^2+2pq-1}.
\end{equation}
We may assume $p\leq q$ without loss of generality. Proposition~\ref{Prop. integer d}(ii) gives $\ell\mid pq$; hence $\ell\in\{1,p,q,pq\}$. If $\ell=pq$, equation~\eqref{ell primes} leads to $d=pq-1=\frac{n}{2}-1$, i.e., $n=2d+2$. Case $\ell=1$ cannot occur, because the right hand side of \eqref{ell primes} is greater than $1$ for any two primes $p,q$. Similarly, $\ell=q$ is not possible, because \eqref{ell primes} gives $d=\sqrt{q^2-2pq+1}$, where $q^2-2pq+1\leq1-q^2<0$ due to the assumption $p\leq q$. In the following we demonstrate that the case $\ell=p$ is impossible as well. If $\ell=p$, equation \eqref{ell primes} implies $p^2=d^2+2pq-1$, so
$$
d^2=(p-q)^2-q^2+1.
$$
Hence $p-q>d$. Denoting the difference $p-q-d$ by an integer variable $k>0$, one can rewrite the last equation as follows:
$$
(q+k+1)(q+k-1)=2kp.
$$
Thus $2k\mid(q+k+1)(q+k-1)$. Let $2k=a\cdot b$ for $a\mid(q+k+1)$ and $b\mid(q+k-1)$. Then
$$
p=\frac{q+k+1}{a}\cdot\frac{q+k-1}{b}.
$$
Since $p$ is prime, necessarily $\frac{q+k+1}{a}=1$ or $\frac{q+k-1}{b}=1$.
\begin{itemize}
\item If $\frac{q+k+1}{a}=1$, we have $b=\frac{2k}{a}=\frac{2k}{q+k+1}<2$; thus $b=1$. Therefore, $q+k-1=1$, which can never be true for an integer $k>0$ and prime $q$.
\item If $\frac{q+k-1}{b}=1$, we have $a=\frac{2k}{b}=\frac{2k}{q+k-1}\leq\frac{2k}{1+k}<2$; hence $a=1$. Thus $q+k+1=1$, which is again false for any positive integer $k>0$ and prime $q$.
\end{itemize}
\end{proof}

In the following proposition, we omit the cases $d=0$ and $d=1$ that were treated generally in Corollaries \ref{conference} and \ref{Hadamard}.

\begin{proposition}\label{prime power}
Let $C$ be a Hermitian circulant matrix satisfying \eqref{Conditions} with an integer $d\geq2$. If $n/2$ is a power of a prime, then $n=2d+2$.
\end{proposition}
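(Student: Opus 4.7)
The plan is to combine Proposition~\ref{Prop. integer d}(i)--(ii) with a congruence analysis modulo $2p^j$.

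Write $n/2 = p^m$. By Proposition~\ref{Prop. integer d}(i),(ii), $\ell := \sqrt{d^2 + n - 1}$ is a positive integer dividing $p^m$, so $\ell = p^j$ with $0 \le j \le m$. The case $j = 0$ is immediate since it yields $d^2 = 2 - 2p^m < 0$. For $j \ge 1$, rewriting $\ell^2 - d^2 = n - 1$ yields the factorization
\[
(p^j - d)(p^j + d) = 2p^m - 1.
\]
Let $A := p^j - d$ and $B := p^j + d$, so $A + B = 2p^j$ and $AB = 2p^m - 1$. Since $d \ge 2 > 0$ and $d^2 < p^{2j}$, we have $0 < A < p^j < B$.

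The key step is the congruence $A^2 \equiv 1 \pmod{2p^j}$. From $A + B = 2p^j$ we have $B \equiv -A \pmod{2p^j}$, and from $j \le m$ we get $AB = 2p^m - 1 \equiv -1 \pmod{2p^j}$; combining them gives the claim. It then remains to enumerate the square roots of unity modulo $2p^j$ that lie in the interval $(0, p^j)$.

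For $p$ odd, $(\mathbb{Z}/2p^j\mathbb{Z})^* \cong (\mathbb{Z}/p^j\mathbb{Z})^*$ is cyclic, so only $\pm 1$ are square roots of $1$, and only $A = 1$ falls in $(0, p^j)$. This forces $d = p^j - 1$ and $B = 2p^j - 1 = 2p^m - 1$, whence $j = m$ and thus $d = p^m - 1 = n/2 - 1$, i.e.\ $n = 2d + 2$.

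The main obstacle is the case $p = 2$, where $(\mathbb{Z}/2^{j+1}\mathbb{Z})^*$ is not cyclic for $j \ge 2$ and admits the four square roots of unity $\{1,\, 2^j - 1,\, 2^j + 1,\, 2^{j+1} - 1\}$. Of these only $1$ and $2^j - 1$ lie in $(0, 2^j)$, but the extra candidate $A = 2^j - 1$ would produce $d = 1$, which is precisely what the hypothesis $d \ge 2$ excludes; the small subcase $j = 1$ similarly forces $d = 1$. Hence in every surviving configuration $A = 1$, and the equation $B = 2p^j - 1 = 2p^m - 1$ again forces $j = m$, giving $n = 2d + 2$. The role of the hypothesis $d \ge 2$ is precisely to eliminate the spurious square root of unity that appears in the 2-adic branch.
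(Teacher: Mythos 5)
Your proof is correct, and past the common first step it takes a genuinely different route from the paper's. Both arguments begin by invoking Proposition~\ref{Prop. integer d}(i)--(ii) to write $\ell=p^j$ with $0\le j\le m$. The paper then rearranges $p^{2j}=d^2+2p^m-1$ into $p^{2j}-2p^m=(d-1)(d+1)$ and argues by divisibility of the right-hand side: for odd $p$ the factors $d\pm1$ cannot both be divisible by $p$, and for $p=2$ one of two consecutive even numbers is only singly even; either way $p^m\le d+1$, which is combined with the bound $d\le\frac{n}{2}-1$ from Proposition~\ref{Prop. even n}(iii) to force equality. You instead factor the other difference of squares, $n-1=(\ell-d)(\ell+d)$, and observe that $A=p^j-d$ satisfies $A^2\equiv1\pmod{2p^j}$, reducing the problem to listing the square roots of unity modulo $2p^j$ that lie in $(0,p^j)$; this pins down $A=1$ outright, and $B=2p^j-1=2p^m-1$ then yields $j=m$ with no appeal to Proposition~\ref{Prop. even n}(iii). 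Your version isolates more sharply where $d\ge2$ is needed (only to discard the spurious root $2^j-1$ in the non-cyclic $2$-adic branch; for odd $p$ your argument even rules out $d\in\{0,1\}$ for free) and determines $d=p^m-1$ exactly rather than squeezing it between two bounds, at the price of invoking the structure of the unit groups $(\mathbb{Z}/2p^j\mathbb{Z})^*$, where the paper needs only parity and coprimality of $d\pm1$. One pedantic note: in your $j=0$ case the quantity $2-2p^m$ equals $0$ rather than being negative when $m=0$, but $d=0$ still contradicts $d\ge2$, so nothing is lost.
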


\begin{proof}
Let $n/2=p^m$ for $p$ being prime and $m$ being a non-negative integer.
Proposition~\ref{Prop. integer d}(ii) implies that $\ell\mid p^m$; hence $\ell=p^j$ for $0\leq j\leq m$. By \eqref{ell} we have $\ell=\sqrt{d^2+2p^m-1}$; i.e., $p^{2j}=d^2+2p^m-1$.
Consequently,
\begin{equation}\label{equation power}
p^{2j}-2p^m=d^2-1.
\end{equation}
Note that $d^2-1>0$ because of the assumption $d\geq2$.
Now we distinguish $p=2$ and $p\geq3$.

\begin{itemize}
\item If $p=2$, the left hand side of equation \eqref{equation power} takes the form
\begin{equation}\label{LHS 2}
2^{2j}-2\cdot2^m=2^{2j}-2^{m+1}=2^{m+1}(2^{2j-m-1}-1);
\end{equation}
so \eqref{equation power} with its right hand side $d^2-1>0$ can be satisfied only if $2j-m-1>0$ and $2^{m+1}\mid(d-1)(d+1)$. Thus both $d-1$ and $d+1$ are even. Since one of any two consecutive even numbers must be oddly even, we have two possibilities:
$$
(d-1=2a \;\wedge\; d+1=2^m b) \quad\text{or}\quad (d-1=2^m a \;\wedge\; d+1=2b),
$$
where both $a,b$ are odd.
So $2^m\mid(d+1)$ or $2^m\mid(d-1)$. Thus in either case we have $2^m\leq d+1$. Recalling that $2^m=\frac{n}{2}$, we obtain $\frac{n}{2}\leq d+1$. This inequality together with Proposition \ref{Prop. even n}(iii) gives $n=2d+2$.

\item If $p\geq 3$, we rewrite \eqref{equation power} as follows:
\begin{equation}\label{LHS 3}
p^m(p^{2j-m}-2)=(d-1)(d+1).
\end{equation}
Since $d-1$ and $d+1$ obviously cannot be both divisible by $p\geq3$, we infer that
$$
p^m\mid(d-1) \quad\text{or}\quad p^m\mid(d+1).
$$
Therefore, in either case we have $p^m\leq d+1$. Since $p^m=\frac{n}{2}$, we have $\frac{n}{2}\leq d+1$. Together with Proposition \ref{Prop. even n}(iii), we get $n=2d+2$.
\end{itemize}
\end{proof}

\begin{remark}
Let us emphasize that Proposition~\ref{Prop. integer d}, Corollary~\ref{prime} and Propositions \ref{two primes} and \ref{prime power} concern circulant matrices $C$ obeying \eqref{Conditions} with an \emph{integer} on the main diagonal. They have no implications on matrices $C$ with non-integer values $d$.
\end{remark}

\begin{example}
Let us consider various integer values of $d\geq2$, for which we will find necessary conditions on $n$ using Proposition~\ref{Prop. integer d}.

\begin{itemize}
\item $d=2$: $n+3$ must be a square and $\sqrt{n+3}\mid 3$. The only even solution is $n=6$.
\item $d=3$: $n+8$ must be a square and $\sqrt{n+8}\mid 4$. The only even solution is $n=8$.
\item $d=4$: $n+15$ must be a square and $\sqrt{n+15}\mid 15$. The only even solutions are $n=10$ and $n=210$.
\item $d=5$: $n+24$ must be a square and $\sqrt{n+24}\mid 12$. The only even solutions are $n=12$ and $n=120$.
\end{itemize}
Proposition~\ref{Prop. trivial} implies that the matrices $C$ of orders $n=2d+2$ (i.e., those with $(d,n)\in\{(2,6),(3,8),(4,10),(5,12)\}$) exist. Recall that a matrix $C = \circ_{2d+2}(d,-1,-1,\ldots,-1)$ obeys \eqref{Conditions}. On the other hand, the existence of matrices $C$ with $(d,n)\in\{(4,210),(5,120)\}$ is open.
\end{example}

\begin{example}\label{example even n}
In this example we shall consider various even values of $n$, for which we will find necessary conditions on $d$ using Proposition~\ref{Prop. even n}.

\begin{itemize}
\item $n\leq14$: The condition $k\leq\frac{n}{2\sqrt{n-1}}$ from Proposition \ref{Prop. even n}(i) gives $k<2$. Hence $k=1$, so $d=\frac{n}{2}-1$ is the only possible value of $d$.
\item $n=16$: The condition $k\leq\frac{n}{2\sqrt{n-1}}=\frac{16}{2\sqrt{15}}$ implies $k=1$ or $k=2$. The value $k=1$ gives the trivial solution $d=\frac{n}{2}-1=7$. Equation \eqref{d2} with the value $k=2$ leads to $d=1$; this case is disproved by numerical simulations (see Appendix~\ref{appendix} where all possible diagonal values $d<\frac{n}{2}-1$ for matrices $C$ of orders $n$ up to $n=22$ are listed).
\item $n=18$: $k\leq\frac{n}{2\sqrt{n-1}}=\frac{18}{2\sqrt{17}}$ implies $k=1$ or $k=2$. The value $k=1$ gives the trivial solution $d=\frac{n}{2}-1=8$. Equation \eqref{d2} with the value $k=2$ leads to $d=\frac{\sqrt{13}}{2}\notin\mathbb{Q}$, which is impossible by Proposition \ref{Prop. even n}(ii).
\item $n=20$: $k\leq\frac{n}{2\sqrt{n-1}}=\frac{20}{2\sqrt{19}}$ implies $k=1$ or $k=2$. The value $k=1$ gives $d=\frac{n}{2}-1=9$, $k=2$ leads to $d=\sqrt{6}\notin\mathbb{Q}$. So the only possible value of $d$ is $d=9$.
\item $n\in\{22,24,\ldots,100\}$: Similarly as above, one obtains mostly either trivial solutions $d=\frac{n}{2}-1$ or forbidden values $d\notin\mathbb{Q}$, with the following $9$ exceptions:

\begin{center}
\begin{tabular}{|c|c||c|c||c|c|}
\hline
$n$ & $d$ & $n$ & $d$ & $n$ & $d$ \\
\hline
$36$ & $1$ & $64$ & $1$ & $78$ & $17/4$ \\ \hline
$40$ & $7/3$ & $66$ & $7/4$ & $96$ & $7$ \\ \hline
$56$ & $17/3$ & $70$ & $11/4$ & $100$ & $1$ \\ \hline
\end{tabular}
\end{center}

The existence of matrices $C$ for the $9$ combinations of $n$ and $d$ in the above table is open.
\end{itemize}
\end{example}

\subsection{Matrices $C$ of odd orders}\label{odd orders}

In case of odd $n$, as well as in case of those even $n$ that cannot be completely resolved using tools from Section~\ref{even orders}, we searched for allowed values of $d$ numerically using the following idea. Since $C$ is Hermitian and satisfies $C^2=(d^2+n-1)I$, the eigenvalues of $C$ are $\pm\sqrt{d^2+n-1}$. Taking vectors $(\lambda_0,\lambda_1,\ldots,\lambda_{n-1})$ with entries $\pm\sqrt{d^2+n-1}$, we calculated the terms of the corresponding generator $(c_0,c_1,\ldots,c_{n-1})$ using formula \eqref{invDFT}. Then we checked whether the values $c_j$ obey conditions \eqref{Conditions}, i.e., $|c_j|=1$ for all $j=1,\ldots,n-1$. In this way we found all allowed values of $d$ that satisfy conditions \eqref{Conditions} up to $n=22$. The results for odd orders $n$ are summarized in Table~\ref{tabla}.
\begin{table}[h!]
\begin{center}
{\renewcommand{\arraystretch}{1.7}
\begin{tabular}{ |c|c||c|c| }
\hline
$n$ & $d$ & $n$ & $d$ \\ 
\hline
$3$ & $\frac{1}{2}$ &  $13$ & $\frac{11}{2 }$, $\frac{5}{2\sqrt{3} }$ \\ \hline
$5$ & $\frac{3}{2}$ &  $15$ & $\frac{13}{2}$, $\frac{1}{4}$ \\ \hline
$7$ & $\frac{5}{2}$, $\frac{1}{2 \sqrt{2}}$  & $17$ & $\frac{15}{2}$  \\ \hline
$9$ & $\frac{7}{2}$ &  $19$ & $\frac{17}{2}$, $\frac{1}{2 \sqrt{5}}$\\ \hline
$11$ & $\frac{9}{2}$,  $\frac{1}{2 \sqrt{3}}$  &   $21$ & $\frac{19}{2}$, $\frac{11}{4}$ \\ \hline

\hline
\end{tabular}
\caption{Numerical results for odd order $n$. }
\label{tabla} 
}
\end{center}
\end{table}

The main difference between the even and odd order $n$ is that for even $n$ diagonal values $d$ have to be rational, whereas they can be irrational for odd $n$. In Appendix~\ref{appendix} we present examples of the generators associated to values $n$ from $2$ to $22$ with the main diagonal $d$ different from $\frac{n}{2}-1$ which were found by the method described above. (Recall that a solution with $d=\frac{n}{2}-1$ always exists -- see Proposition~\ref{Prop. trivial}.)

\subsection{Matrices $C$ with off-diagonal entries from $\{1,-1,\mathrm{i},-\mathrm{i}\}$}\label{section 4}

Let us now discuss a special case of complex circulant matrices satisfying \eqref{Conditions} with off-diagonal entries being 4th roots of unity, i.e., $c_j\in\{1,-1,\mathrm{i},-\mathrm{i}\}$ for all $j=1,\ldots,n-1$. As recalled in Remark~\ref{rem. CK}, Craigen and Kharaghani proved that Hadamard matrices (satisfying \eqref{Conditions} for $d=1$) of this type exist only of order $n=4$ (and trivially $n=1$). In this section we extend their result to any $d$ that is not an odd integer, showing that the order of such matrix $C$ is necessarily $n=2d+2$. Furthermore, if a generalization of the circulant Hadamard conjecture proposed in \cite{TG} is true, the necessary condition $n=2d+2$ applies on matrices $C$ with odd diagonal values $d\geq0$ as well.

\begin{theorem}\label{4th roots}
If $d\geq0$ is not an odd integer, then a Hermitian circulant matrix $C=\circ_n(c_0,c_1,\ldots,c_{n-1})$ ($n\geq2$) satisfying \eqref{Conditions} with off-diagonal entries $c_j\in\{1,-1,\mathrm{i},-\mathrm{i}\}$ exists only of order $n=2d+2$. Moreover, $C$ is real and takes the form
\begin{itemize}
\item $C=\circ_{2d+2}(d,-1,-1,\ldots,-1)$ or $C=\circ_{2d+2}(d,1,-1,1,-1,1,\ldots,-1,1)$ for even $d$;
\item $C=\circ_{2d+2}(d,-1,-1,\ldots,-1)$ for $d$ being half-integer.
\end{itemize}
\end{theorem}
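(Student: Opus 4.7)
The plan is to split $C$ into its real symmetric and real antisymmetric parts, prove that the antisymmetric part vanishes, and then invoke the real-symmetric classification of~\cite{TG}. Write $C=A+iB$ with $A=(C+C^*)/2$ and $B=(C-C^*)/(2i)$. Because $c_j\in\{1,-1,i,-i\}$ off the diagonal, $A$ is a real symmetric circulant matrix with diagonal $d$ and off-diagonal entries in $\{0,\pm 1\}$, and $B$ is a real antisymmetric circulant matrix with zero diagonal and off-diagonal entries in $\{0,\pm 1\}$; on any off-diagonal position exactly one of $A$, $B$ is nonzero. Since $A$ and $B$ commute (both are circulant), $CC^*=C^2=(A^2-B^2)+2iAB$, and comparison with $CC^*=(d^2+n-1)I$ yields
\[
A^2-B^2=(d^2+n-1)I,\qquad AB=0.
\]
Once $B=0$ is proved, $C=A$ is a real symmetric circulant with off-diagonal entries $\pm 1$ satisfying $A^2=(d^2+n-1)I$, so \cite{TG} forces $n=2d+2$ and lists the possibilities, giving $\circ_{2d+2}(d,-1,\ldots,-1)$ for every permissible $d$ and additionally $\circ_{2d+2}(d,1,-1,\ldots,-1,1)$ when $n$ is even (i.e.\ when $d$ is an even integer; the alternating pattern fails to close Hermitian-compatibly when $n$ is odd).

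To set up the main reduction I would rewrite the off-diagonal identity $(C^2)_{0,k}=0$ for $k\neq 0$ as
\[
\sum_{j=1,\,j\neq k}^{n-1} c_j c_{k-j}\;=\;-2d c_k.
\]
The left-hand side lies in $\mathbb{Z}[i]$ and $c_k$ is a unit, which forces $2d\in\mathbb{Z}$; together with the hypothesis that $d$ is not an odd integer, $d$ is either a non-negative even integer or a non-negative half-integer. Reducing the same identity modulo the Gaussian prime $(1+i)$, in which every fourth root of unity is $\equiv 1$ and $2\equiv 0$, then forces $n$ to be even when $d$ is integer and $n$ to be odd when $d$ is a half-integer.

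To prove $B=0$, suppose for contradiction that some $c_k\in\{i,-i\}$ and apply the involution $j\mapsto k-j$ to the index set $\{1,\ldots,n-1\}\setminus\{k\}$. In the half-integer case $n$ is odd, $2$ is invertible in $\mathbb{Z}/n\mathbb{Z}$, and the involution has a single fixed point $j^*=k\cdot 2^{-1}\pmod n$; grouping the remaining terms in pairs gives
\[
\sum_{j} c_j c_{k-j}=c_{j^*}^2+2T,\qquad T\in\mathbb{Z}[i].
\]
Reducing modulo $2$ in $\mathbb{Z}[i]$ (so $\pm 1\equiv 1$ and $\pm i\equiv i$), the LHS is $\equiv c_{j^*}^2\equiv 1$ while the RHS $-2d c_k\equiv c_k\pmod 2$ because $2d$ is odd; this forces $c_k\equiv 1$, contradicting $c_k\equiv i$. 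So $B=0$ in the half-integer case, and the real-symmetric classification takes over.

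The main obstacle is the analogous step for even-integer $d$. Here $n$ is even and, using Proposition~\ref{Prop. integer d} together with $d^2\equiv 0\pmod 4$, one has $n\equiv 2\pmod 4$. The reduction modulo $2$ is now automatic and carries no information, so I would sharpen it to the ring $\mathbb{Z}[i]/(1+i)^3$ of eight elements, where $4\equiv 0$ but the quadratic residues $(\pm 1)^2\equiv 1$ and $(\pm i)^2\equiv 3$ become distinguishable. For even $k$ the involution now has two fixed points $k/2$ and $k/2+n/2$, and matching the congruence in this finer quotient forces $c_{k/2}$ and $c_{k/2+n/2}$ to be of the same type for every even $k$; equivalently, the set $J_B=\{j:c_j\in\{\pm i\}\}$ is invariant under $j\mapsto j+n/2$. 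Coupling this extra symmetry of $J_B$ with the divisibility constraints $\ell\mid n/2$ and $\ell\mid d^2-1$ from Proposition~\ref{Prop. integer d}, and with the rank identity $|S_B|=2nc/\ell^2$ read off from $\operatorname{tr}(B^{\top}B)$ against the DFT spectrum of $B$, should rule out every nonzero $B$, returning the problem to the real-symmetric case. The delicate mod-$(1+i)^3$ bookkeeping in this sub-case is the step I expect to be the hardest.
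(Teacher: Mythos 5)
Your decomposition $C=A+\mathrm{i}B$ and the identities $AB=0$, $A^2-B^2=(d^2+n-1)I$ match the paper's starting point, and your half-integer branch is a correct, self-contained argument: the mod-$(1+\mathrm{i})$ reduction forcing $n$ odd, the single fixed point of $j\mapsto k-j$, and the mod-$2$ comparison $c_{j^*}^2+2T\equiv -2dc_k$ with $2d$ odd do legitimately force every $c_k\in\{\pm1\}$, after which the symmetric classification of \cite{TG} finishes that case. The problem is the even-integer branch, which is where the real content of the theorem lies, and which you leave as a sketch. Your mod-$(1+\mathrm{i})^3$ step does yield that $c_{k/2}^2=c_{k/2+n/2}^2$ (using $n\equiv2\pmod4$ so that $T\equiv1\pmod{(1+\mathrm{i})}$ and $2T$ is exactly divisible by $(1+\mathrm{i})^2$), hence the invariance of $J_B$ under $j\mapsto j+n/2$; but the claimed endgame --- ``coupling this with $\ell\mid n/2$, $\ell\mid d^2-1$ and the rank identity $|S_B|=2nc/\ell^2$ \ldots should rule out every nonzero $B$'' --- is not an argument: $S_B$ and $c$ are undefined, no contradiction is exhibited, and nothing in what you have written forces $n=2d+2$ for even $d$. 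You flag this yourself as the hardest step; as it stands it is a genuine gap, not a routine verification. A warning sign that the gap is deep: for \emph{odd} $d$ there exist solutions with $B\neq0$ (e.g.\ $\circ_4(1,\mathrm{i},1,-\mathrm{i})$ for $d=1$), so any direct proof of $B=0$ must exploit the parity of $d$ in exactly the place you have not worked out.

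The idea you are missing is the one the paper uses to bypass all of this: since $A$ and $B$ have disjoint supports off the diagonal, the real circulant matrix $M=A+B$ has \emph{all} off-diagonal entries equal to $\pm1$, and because circulants commute, $MM^T=(A+B)(A-B)=A^2-B^2=(d^2+n-1)I$. Thus $M$ is a real (not a priori symmetric) circulant matrix satisfying \eqref{Conditions}, and the results of \cite{TG} for such matrices --- $2d\in\mathbb{Z}$; $n=2d+2$ and $M$ symmetric whenever $d$ is a half-integer or an even integer --- immediately give both the order constraint and, via $B=\frac{1}{2}(M-M^T)=0$, the reality of $C$. This single observation replaces your entire congruence analysis and, in particular, the unfinished even-integer case; it also explains why the hypothesis that $d$ is not an odd integer is needed (for odd $d$ the non-symmetric real classification in \cite{TG} is only conjectural). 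I would recommend either adopting this reduction or, if you want a self-contained Gaussian-integer proof, completing the even-$d$ case in full before claiming the theorem.
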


\begin{proof}
We start the proof similarly as Craigen and Kharaghani in \cite[Thm.~7]{CK}. Let us write the circulant matrix $C$ satisfying the assumptions as $C=A+\mathrm{i}B$, where $A,B$ are real matrices. Then both $A$ and $B$ are circulant matrices, $A$ is symmetric, $B$ is skew-symmetric. Let us denote $M=A+B$; then $M$ is a circulant matrix satisfying $MM^T=(A+B)(A-B)=A^2-B^2$ (recall that $A,B$ are circulant matrices, so they commute). Since $CC^*=(A+\mathrm{i}B)(A+\mathrm{i}B)=A^2-B^2+2\mathrm{i}AB=(d^2+n-1)I$, we have $AB=0$ and $A^2-B^2=(d^2+n-1)I$. So $MM^T=(d^2+n-1)I$. To sum up, $M$ is a real circulant matrix satisfying \eqref{Conditions}. Now, taking advantage of results of \cite{TG}, we have:
\begin{itemize}
\item $2d$ must be integer; thus $M$ exists only for $d$ being half-integer or integer (and so does $C$) \cite[Prop.~3.1]{TG};
\item if $d$ is half-integer, then $n=2d+2$ \cite[Prop.~3.1]{TG} and $M=\circ_{2d+2}(d,-1,-1,\ldots,-1)$ \cite[Rem.~3.2]{TG};
\item if $d$ is even integer, then $n=2d+2$ \cite[Thm.~3.5]{TG} and $M$ is symmetric \cite[Prop.~3.4]{TG}. Moreover, the value $n$ is oddly even, thus \cite[Sect.~5]{TG} implies that $M$ has one of the forms
\begin{align*}
M&=\circ_{n}\left(\frac{n}{2}-1,-1,-1,\ldots,-1\right)=\circ_{2d+2}(d,-1,-1,\ldots,-1), \\
M&=\circ_{n}\left(\frac{n}{2}-1,1,-1,1,-1,1,\ldots,-1,1\right)=\circ_{2d+2}(d,1,-1,1,-1,1,\ldots,-1,1).
\end{align*}
\end{itemize}
Finally, since $M$ is symmetric in all cases, we conclude that $B=0$; hence $C=A=M$ is real.
\end{proof}

\begin{remark}
It was conjectured in \cite[Conjecture 3.6]{TG} that real circulant matrices of order $n\geq2$ satisfying \eqref{Conditions} with odd values $d>0$ exist only for $n=2d+2$ as well. This is a generalization of the \emph{circulant Hadamard conjecture} stating that there a real circulant Hadamard matrix exists only of order $n=4$ (and trivially of order $n=1$). If the generalized conjecture is true, one can extend the argument in the proof of Theorem~\ref{4th roots} to odd $d>0$ as well, obtaining that a Hermitian circulant matrix $C$ satisfying \eqref{Conditions} with an odd $d>0$ and off-diagonal entries $c_j\in\{1,-1,\mathrm{i},-\mathrm{i}\}$ exists only of order $n=2d+2$. Moreover, we know from \cite[Sect.~5]{TG} that any real circulant matrix $M$ satisfying \eqref{Conditions} with an odd $d$ and $n=2d+2$ (so $n$ is a multiple of $4$) has one of the forms
\begin{gather*}
\circ_{2d+2}(d,-1,-1,\ldots,-1),\quad \circ_{2d+2}(d,1,-1,1,-1,1,\ldots,-1,1), \\
\circ_{2d+2}(d,1,1,-1,-1,1,1,-1,-1,\ldots,1,1,-1),\quad
\circ_{2d+2}(d,-1,1,1,-1,-1,1,1,-1,\ldots,-1,1,1).
\end{gather*}
Hence we obtain, similarly as in the proof of Theorem~\ref{4th roots}, that the matrix $C$ can be either real, taking one of the forms
$$
\circ_{2d+2}(d,-1,-1,\ldots,-1),\quad \circ_{2d+2}(d,1,-1,1,-1,1,\ldots,-1,1),
$$
or complex taking one of the forms
$$
\circ_{2d+2}(d,\mathrm{i},1,-\mathrm{i},-1,\mathrm{i},1,-\mathrm{i},-1,\ldots,\mathrm{i},1,-\mathrm{i}),\quad
\circ_{2d+2}(d,-\mathrm{i},1,\mathrm{i},-1,-\mathrm{i},1,\mathrm{i},-1,\ldots,-\mathrm{i},1,\mathrm{i})
$$
(notice that the last two matrices are conjugate transposes to each other).
\end{remark}

\section{Circulant matrices over $\mathbb{Z}_m$}

In this section we will briefly consider circulant matrices $C$ satifying conditions \eqref{Conditions} with entries $c_j$ being elements of the ring $\mathbb{Z}_m=\{0,1,\ldots,m-1\}$ for some $m$. 
In this particular case, the condition $|c_j|=1$ is meant as $c_j \equiv 1 \pmod m \text{ or } c_j \equiv -1 \equiv m-1 \pmod m.$ 

First of all, note the following fact:
\begin{remark}
For any $C=\circ_n (d,c_1,c_2,\dots, c_{n-1})$ over $\mathbb Z_m$ such that  $C\cdot C^T=(d^2+n-1)I,$ the matrix $-C=\circ_n (m-d,-c_1,-c_2,\dots, -c_{n-1})$ fulfills the same condition.
\end{remark}

\begin{proposition}\label{Zuza even m}
Let $C=\circ_n (d,c_1,c_2,\dots, c_{n-1})$ be defined over $\mathbb{Z}_m$ with $c_i\equiv\pm 1 \pmod m.$  If $m$ is even, then $n$ is also even.
\end{proposition}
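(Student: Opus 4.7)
My approach is to reduce everything modulo $2$, exploiting the fact that when $m$ is even both allowed values of an off-diagonal entry ($1$ and $m-1$) are odd. Hence $c_i \equiv 1 \pmod{2}$ for every $i = 1, \ldots, n-1$. The orthogonality condition $CC^T = (d^2+n-1)I$ forces each off-diagonal entry of $CC^T$ to equal $0$ in $\mathbb{Z}_m$, so in particular $0 \pmod{2}$; from this I plan to extract a parity constraint on $n$.

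Concretely, I would fix a nonzero shift $t \in \{1, \ldots, n-1\}$ and write the corresponding off-diagonal entry of $CC^T$ as
\[
\sum_{\ell=0}^{n-1} c_\ell \, c_{(\ell+t) \bmod n} .
\]
Then I would single out the two summands that involve the diagonal value $d = c_0$, namely the one with $\ell = 0$ (which gives $d\,c_t$) and the one with $\ell = n-t$ (which gives $c_{n-t}\,d$); these are distinct summands because $t \not\equiv 0 \pmod{n}$. The remaining $n-2$ summands are products $c_\ell\, c_{(\ell+t) \bmod n}$ in which both indices are nonzero.

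Reducing modulo $2$, the two $d$-summands contribute $d(c_t + c_{n-t}) \equiv d(1+1) \equiv 0 \pmod{2}$, while each of the other $n-2$ summands is a product of two units, each $\equiv 1 \pmod{2}$, and so is itself $\equiv 1 \pmod{2}$. The whole sum is therefore $\equiv n-2 \pmod{2}$, and the orthogonality condition forces this to be $0 \pmod{2}$. Hence $n$ is even.

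I do not anticipate a substantial obstacle. The only subtlety worth checking is that the two $d$-summands really are different summands for every admissible shift $t$, which is immediate from $t \not\equiv 0 \pmod{n}$; the argument also needs $n \geq 2$ so that at least one such shift exists, which is tacit throughout the paper.
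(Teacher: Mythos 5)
Your proof is correct and follows essentially the same route as the paper: both arguments compute the parity of an off-diagonal entry of $CC^T$, observing that the two summands involving $d$ contribute an even amount while the remaining $n-2$ summands are each odd, forcing $n\equiv 0\pmod 2$ when $m$ is even. Your write-up is just a more explicit version of the paper's one-line parity count, including the (correct) check that the two $d$-summands are distinct for every shift $t\not\equiv 0\pmod n$.
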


\begin{proof}
The dot product of any two  distinct rows is sum of $n$ terms, of which $n-2$ are equal to $\pm 1$ and the other two are $\pm d.$ Thus the sum is even if $n$ is even and odd if $n$ is odd. Hence if $m$ is even, $n$ must be too.
\end{proof}

\begin{remark}
The converse implication of Proposition~\ref{Zuza even m} does not hold. For example, consider $C=\circ_4(2,1,1,1)$ over $\mathbb Z_3.$ In this case $n=4$ is even and $m=3$ is odd, and for this matrix $C\cdot C^T=I.$ 
\end{remark}

\subsection{Odd $n$}

If a circulant matrix $C$ is defined over $\mathbb R$, satisfies the conditions \eqref{Conditions}  and its order $n$ is odd, then the generator of $C$ is $(\frac{n}{2}-1,-1,\dots, -1),$ so the matrix $C$ has to be symmetric. This follows from \cite[Prop.~3.1 and Sect.~5]{TG}.

If $C$ of an odd order $n$ is defined over $\mathbb Z_m$, the situation is different. There exist non-symmetric matrices $C$ satisfying conditions \eqref{Conditions}. Consider for example the matrix $C=\circ_9 (1,1,1,1,1,1,1,1,-1)$ over $\mathbb{Z}_5$ is not symmetric and satisfies \eqref{Conditions}.

\subsection{Symmetric matrices}

If $C$ is a symmetric matrix over $\mathbb Z_m$, i.e., $c_k=c_{n-k}$,  where the subscripts are interpreted modulo $n,$
the condition  $C\cdot C^T=(d^2+n-1)I$ leads to
$$2c_0 c_{k}+\sum_{j=1,\\ j\neq k}^{n-1} c_j\cdot c_{n-k+j}\equiv 0 \pmod m$$
for all $k=1,\dots, n-1$ (subscripts are interpreted modulo $n$), 
i.e.,
$$2d c_{k}+\sum_{j=1,\\ j\neq k}^{n-1} c_j\cdot c_{n-k+j}\equiv 0 \pmod m.$$
Note that due to the symmetry, it is sufficient to  verify this condition for $k=1,\ldots,\lceil\frac{n}{2}\rceil+1$.
\vskip0.5cm

\subsubsection{Matrix with the generator $(d,-1,-1,\ldots,-1)$}\label{section trivial}

In analogy with Proposition~\ref{Prop. trivial}, we can formulate the following statement:

\begin{proposition}
A symmetric circulant matrix $C$ over $\mathbb{Z}_m$ satisfying \eqref{Conditions} exists for each $n$ and $d$ such that $n\equiv 2d+2 \mod m.$
\end{proposition}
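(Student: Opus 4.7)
The natural candidate, by direct analogy with Proposition~\ref{Prop. trivial}, is the matrix $C=\circ_n(d,-1,-1,\ldots,-1)$ over $\mathbb{Z}_m$. My plan is to exhibit this matrix and verify that it satisfies all the required conditions whenever $n\equiv 2d+2\pmod m$.

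First I would check the easy structural requirements. The matrix is clearly symmetric, since $c_k=-1=c_{n-k}$ for every $k=1,\ldots,n-1$, and the off-diagonal entries satisfy $c_j=-1\equiv m-1\pmod m$, which counts as $|c_j|=1$ under the convention stated just before Proposition~\ref{Zuza even m}. The main diagonal entry is $d$ by construction.

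The substantive step is verifying $CC^T=(d^2+n-1)I$ in $\mathbb{Z}_m$. Since $CC^T$ is again circulant, it suffices to compute the first row. The $(0,0)$ entry is $c_0^2+\sum_{j=1}^{n-1}c_j^2=d^2+(n-1)$, as required. For the $(0,k)$ entry with $k\neq 0$, I would expand
\[
(CC^T)_{0,k}=\sum_{j=0}^{n-1}c_j\,c_{(j-k)\bmod n},
\]
and observe that exactly two summands involve the diagonal entry $d$, namely those with $j=0$ and $j=k$, each contributing $-d$; the remaining $n-2$ summands are products of two $-1$'s and each contribute $+1$. Thus $(CC^T)_{0,k}=n-2-2d$, which is congruent to $0$ modulo $m$ precisely because of the hypothesis $n\equiv 2d+2\pmod m$.

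There is no real obstacle here: the argument is a direct calculation, and the construction is simply the modular reduction of the trivial real solution from Proposition~\ref{Prop. trivial}. The only thing worth emphasizing in the write-up is why the congruence hypothesis is exactly what is needed, i.e., that it coincides with the vanishing of the universal off-diagonal expression $n-2-2d$.
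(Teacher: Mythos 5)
Your proposal is correct and follows essentially the same route as the paper: both exhibit the candidate $C=\circ_n(d,-1,\ldots,-1)$ and reduce the orthogonality condition to the single universal congruence $n-2-2d\equiv 0\pmod m$ (the paper writes it with the opposite sign, $2d-n+2\equiv 0$, which is equivalent). Your write-up is if anything slightly more careful, since you also verify the diagonal entry and the symmetry explicitly.
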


\begin{proof}
Consider the matrix $C=\circ_n(d,-1,-1,\dots,-1)$ over $\mathbb Z_m.$ The condition $C\cdot C^T=(d^2+n-1)I$ leads to
$$2d-\sum_{j=1,\\ j\neq k}^{n-1} 1\equiv 0 \pmod m,$$
i.e.,
$$2d-n+2\equiv 0 \pmod m,$$
which is equivalent to $n\equiv 2d+2 \pmod m.$
\end{proof}

\begin{example}
In the special case when $n=m+2$ and $m$ is odd, any matrix of the type
 $$\circ_{m+2}(0,-1,\dots,-1)$$
over $\mathbb{Z}_m$ fulfills the condition \eqref{Conditions}. If $n=m+2$ and $m$ is even, then 
any matrix of the type
 $$\circ_{m+2}(0,-1,\dots,-1),$$
$$\circ_{m+2}(\frac{m}{2},-1,\dots,-1)$$
over $\mathbb{Z}_m$ fulfills the condition \eqref{Conditions}.

\end{example}

\begin{example}\label{m=2}
If $m=2$, i.e., for $C$ defined over $\mathbb Z_2,$ the congruence
$$2d-n+2\equiv 0 \mod 2$$
 is trivially fulfilled for any even $n$ and any $d.$
So both matrices
$$\circ_{2k}(0,-1,\dots,-1),$$
$$\circ_{2k}(1,-1,\dots,-1)$$
over $\mathbb Z_2$ fulfill the conditions \eqref{Conditions}.
A matrix $C$ over $\mathbb{Z}_2$ of an odd order $n$ satisfying \eqref{Conditions}does not exist, in keeping with Proposition~\ref{Zuza even m}.

\end{example}

\begin{remark}
Because $1\equiv -1 \pmod 2,$ for matrices over $\mathbb Z_2$ there is no difference between $c_i=1$ and $c_i=-1.$ Therefore, Example~\ref{m=2} implies that any circulant matrix $C$ of an even order $n$ over $\mathbb{Z}_2$ with off-diagonal entries $\pm1$ obeys conditions \eqref{Conditions}.
\end{remark}

\subsubsection{An example of a matrix $C$ that does not fulfill the conditions over $\mathbb R$ but fulfills them over $\mathbb Z_m$ }

By \cite{TG}, a symmetric circulant matrix $C$ over $\mathbb R$ satisfies conditions \eqref{Conditions} only if its generator is $(d,-1,-1, \dots,-1)$ or $(d,-1,+1,-1,+1,\dots,-1).$
Let us demonstrate that this necessary condition does not extend to matrices $C$ defined over $\mathbb{Z}_m$.
We will construct an example of a symmetric circulant matrix $C$ that does not fulfills the conditions \eqref{Conditions} over $\mathbb R$, but fulfills them over $\mathbb Z_m$.
 
Let $C=\circ_n(d,c_1,c_2,\dots,c_{n-1})$ be defined over $\mathbb Z_m,$ where $n$ is even, $c_{\frac{n}{2}}=1 \pmod m$ and $c_i=-1 \pmod m$ for all $i\neq \frac{n}{2}.$ I.e., the generator of $C$ is
\begin{equation}\label{C one plus}
(d,\underbrace{-1,\ldots,-1}_{\frac{n}{2}-1 \text{ terms}},1,\underbrace{-1,\ldots,-1}_{\frac{n}{2}-1 \text{ terms}}).
\end{equation}
The dot product of the 0-th row and the $k-$th row of $C$ is
\begin{equation}\label{Zuza4}
2d c_{k}+\sum_{j=1,\\ j\neq k}^{n-1} c_j\cdot c_{n-k+j},  
\end{equation}
where the subscripts are interpreted modulo $n.$
So for $k\neq \frac{n}{2},$ \eqref{Zuza4} is equal to
$2d-(n-4)+2,$
and for
$k= \frac{n}{2},$ \eqref{Zuza4} gives
$2d-(n-2).$ 

Therefore, the condition $C\cdot C^T=(d^2+n-1)I$ requires the following two congruences to be fulfilled:
\begin{equation}\label{Zuza5}
2d\equiv -n+2 \pmod m \qquad\wedge\qquad
2d\equiv n-6 \pmod m.
\end{equation}
In examples below, we will consider explicit solutions.

\vskip0.5cm

\begin{example}
Let $C$ of an even order $n$ defined over $\mathbb Z_m$ satisfy \eqref{C one plus} and $m\mid n.$ Then from the congruences we have $0\equiv 8 \mod m,$
so $m=2,4,\text{or} \, 8.$
We will describe each situation separately.
\begin{enumerate}
\item For $m=2,$ already examined in Section~\ref{section trivial}, 
any matrix $C$ over $\mathbb Z_2$ with even $n$ such that ($m\mid n$) of the type
$$\circ_{2k}(d,-1,\dots,-1,1,-1, \dots,-1)=\circ_{2k}(d,1,\dots,1,1,1, \dots,1)$$
with $k\in\mathbb{N}$ fulfills the conditions \eqref{Conditions}.

\item For $m=4,$ we get $d=1\, \text{or}\, 3,$ so  
 any matrix $C$ over $\mathbb Z_4$ of the type
$$\circ_{4k}(1,-1,\dots,-1,1,-1, \dots,-1)$$
$$\circ_{4k}(3,-1,\dots,-1,1,-1, \dots,-1)$$
with $k\in\mathbb{N}$ fulfills the conditions \eqref{Conditions}.

\item  For $m=8:$ we have $d=1\, \text{or}\, 5 \mod 8,$ so  
any matrix $C$ over $\mathbb Z_8$ of the type
$$\circ_{8k}(1,-1,\dots,-1,1,-1, \dots,-1)$$
$$\circ_{8k}(5,-1,\dots,-1,1,-1, \dots,-1)$$
with $k\in\mathbb{N}$ fulfills the conditions \eqref{Conditions}.
\end{enumerate}

	\end{example}
\vskip0.5cm

\begin{example}
Let $C$ of an even order $n=2k$ defined over $\mathbb Z_m$ satisfy \eqref{C one plus} and let $m$ be odd. Then the congruences \eqref{Zuza5} lead to
$$2d\equiv -2k +2 \pmod m \quad\wedge\quad 2d\equiv 2k-6 \pmod m.$$
Dividing in both congruences by $2$ (which is a correct step due to $\gcd(2,m)=1$), we get
$$ d\equiv -k+1 \pmod m \quad\wedge\quad d \equiv -3+k \pmod m,$$
hence
$$
2d \equiv -2 \pmod m,
$$
and so
$$d\equiv -1 \pmod m.$$
In this case we get the matrices $C$ over $\mathbb Z_m$ of the type

$$\circ_{2m\ell+4}(m-1,-1,\dots,-1,1,-1, \dots,-1),$$
where $m$ is odd.
\end{example}

\vskip0.5cm

\begin{example}
Let $C$ of an even order $n=2k$ defined over $\mathbb Z_m$ satisfy \eqref{C one plus} and let $m$ be even. Then the congruences \eqref{Zuza5} lead to
$$ \frac{n}{2}\equiv -d+1 \pmod {\frac{m}{2}} \quad\wedge\quad \frac{n}{2}\equiv 3+d \pmod {\frac{m}{2}}.$$
By adding/substracting these two congruences, we get
$$n\equiv 4 \pmod {\frac{m}{2}},$$
$$2d\equiv -2 \pmod {\frac{m}{2}}.$$
Hence, for odd  $ \frac{m}{2},$ we obtain the matrices of type

$$\circ_{m\ell+4} (d,-1,\dots,-1,1,-1,\dots,-1),$$
where $\ell\in\{0,1,2,\ldots\}$ and
$d\equiv -1 \pmod {\frac{m}{2}}.$

For even  $ \frac{m}{2},$ we obtain the matrices of type

$$\circ_{\frac{m\ell}{2}+4} (d,-1,\dots,-1,1,-1,\dots,-1),$$
where $\ell\in\{0,1,2,\ldots\}$ and
$d\equiv -1 \pmod {\frac{m}{4}}.$

\end{example}

\bigskip

\section{Application: Mutually unbiased bases}

In this section, we present an application for the particular case of circulant matrices $C$ satisfying conditions (\ref{Conditions}) with $d=1$. If $c_0=d=1$, then all the entries of the generator have absolute value $1$, so $C = \circ_n(c_0,c_1,\ldots,c_{n-1})$ defines an unormalized circulant complex Hadamard matrix of order $n$. Here, the absence of normalization is in the sense that matrix $C$ is proportional to unitary but not unitary, as stated in Conditions (\ref{Conditions}). From now on, we consider normalized circulant matrices $C$, which differ from those defined in (\ref{Conditions}) by a constant factor $1/\sqrt{n}$. The reason to introduce this normalization is because columns of the considered unitary matrices define mutually unbiased -orthonormal- bases of $\mathbb{C}^n$. Throughout this section, we assume that $C$ is \emph{not necessarily Hermitian}; non-Hermitian matrices $C$ are allowed. The problem of the existence of matrices $C$ having constant diagonal $d=1$ is particularly hard to solve in its full generality for arbitrary large $n$, as it contains the long-standing \emph{circulant Hadamard conjecture} \cite{R63}.

Let $\lambda=(\lambda_0,\dots,\lambda_{n-1})$ be the vector of the eigenvalues of $C$. From Eqs.(\ref{Conditions}) we know that $\lambda_j=\sqrt{d^2+n-1}\,e^{i\alpha_j}$, where $\alpha_j\in[0,2\pi]$ are suitable phases, for every $j=0,\dots,n-1$. As a basic property of circulant matrices, the generator of $C$ is given by $g=F\lambda$, where $F$ is the discrete Fourier transform of order $n$. 

In order to satisfy conditions (\ref{Conditions}) we should have $[g]_0=d$ and $|[g]_j|=1$, for every $j=0,\dots,n-1$, where $[g]_k$ denotes the $k$th entry of vector $g$. Let us now show that this particular problem for $d=1$ is one-to-one related to a well-known problem in quantum information theory: the \emph{mutually unbiased bases} problem.

Two orthonormal bases in $\mathbb{C}^n$, $\{\phi_j\}_{j=0,\dots,n-1}$ and $\{\psi_k\}_{k=0,\dots,n-1}$, are mutually unbiased (MU) if $|\langle\phi_j|\psi_k\rangle|^2=\frac{1}{n}$, for every $j,k=0,\dots,n-1$. Two MU bases exist in every dimension $n\geq2$. Indeed, the canonical basis in dimension $n$ is MU to the basis defined by the columns of the discrete Fourier transform for any order $n\geq2$.

Even more, three pairwise MU bases (MUB) exist in every dimension $n\geq2$ \cite{BBRV02}. They are given by the eigenvectors bases of the three unitary operators $Z,X$ and $XZ$, where $Z=\sum_{j=0}^{n-1}\omega^j\langle e_j,\cdot\rangle\,e_j$, $X=\sum_{j=0}^{n-1}\langle e_j,\cdot\rangle\,e_{j+1 \pmod {n}}$. Here, $\{e_j\}_{j=0,\dots,n-1}$ denotes the $j$th element of the canonical basis and $\omega=\mathrm{e}^{2\pi \mathrm{i}/n}$. Eigenvectors of $Z$ are given by the canonical basis, whereas the colums (or rows) of the discrete Fourier transform of order $n$ are eigenvectors of $X$. For prime values of $n$, the eigenvectors basis of the product operator $XZ$ is given by 
\begin{equation}
\varphi_j=\frac{1}{\sqrt{n}}\sum_{k=0}^{n-1}\omega^{-jk-s_k},
\end{equation}
where $s_k=k+\cdots+n-1$, cf. Eq.(3) in Ref. \cite{BBRV02} for the special case $k=1$ (here $k$ follows notation used in Ref. \cite{BBRV02}).

In general, there are at most $n+1$ MUB in dimension $n$, where the upper bound can be saturated for every prime \cite{I81} and prime power \cite{WF89} dimension $n$. For any other composite dimension, e.g. $n=6$, it is not known how many pairwise MU bases can be constructed; this question is one of the main open problems in quantum information theory. The importance of MU bases relies on the fact that two physical observables, represented by hermitian operators, are canonical (i.e. as different as possible) if and only if their eigenvectors bases are MU. So, translated to physics, the open question is about how many mutually canonical observables exist in every finite dimension $n$. Furthermore, the existence of a maximal set of $n+1$ MUB in dimension $n$ provides a protocol for quantum state reconstruction from experimental measurements \cite{I81}, which maximizes the robustness of reconstruction under the presence of errors in both state preparation and measurement stages \cite{S06}. \medskip

Before introducing the relation to our problem let us establish a standard notation. When refering to a set of $m$ MU bases we will use the notation $\{M_1,\dots,M_m\}$, where $M_j$, $j=1,\dots,m$, are unitary matrices containing the vectors forming the bases in its columns. According to this notation, note that $M_j^*M_k=nH^{(j,k)}$, where all matrices $H^{(j,k)}$ are unnormalized complex Hadamard matrix. For instance, $\{I,F\}$, i.e. identity and Fourier matrices, define a pair of MU bases for any order $n$.\bigskip

\begin{proposition}
The identity matrix $I$ together with discrete Fourier transform $F$ and any circulant matrix $C$ satisfying conditions (\ref{Conditions}) with $d=1$ define a set of three MUB in dimension $n\geq2$. 
\end{proposition}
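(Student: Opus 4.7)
The plan is to reinterpret the three matrices as unitaries $I$, $\tilde F := \frac{1}{\sqrt n}F$ and $\tilde C := \frac{1}{\sqrt n}C$ --- the last is unitary because \eqref{Conditions} with $d=1$ gives $CC^*=nI$ --- and then verify the three pairwise MU conditions.

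Two of the three pairs are immediate. For $(I,\tilde F)$ every entry of $\tilde F$ has modulus $1/\sqrt n$, so any column of $I$ paired with any column of $\tilde F$ has inner product of modulus $1/\sqrt n$. For $(I,\tilde C)$ the diagonal entry $d=1$ together with $|c_j|=1$ for $j\geq 1$ give $|[\tilde C]_{jk}|=1/\sqrt n$ for every $j,k$, which is again the MU condition.

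The essential step is the pair $(\tilde F,\tilde C)$. I plan to use the eigenbasis of circulant matrices recalled in the Preliminaries: the columns of $\tilde F$ are precisely the normalized eigenvectors $v_k$ of $C$, so $\tilde F^* C \tilde F = \Lambda$ with $\Lambda = \mathrm{diag}(\lambda_0,\dots,\lambda_{n-1})$ the matrix of eigenvalues from \eqref{eigenvalues}. Rearranging gives $\tilde F^* \tilde C = \frac{1}{\sqrt n}\,\Lambda\, \tilde F^*$, hence
\[
\bigl|[\tilde F^* \tilde C]_{jk}\bigr| \;=\; |\lambda_j|\cdot\bigl|[\tilde F^*]_{jk}\bigr|/\sqrt n \;=\; |\lambda_j|/n.
\]
The condition $CC^*=nI$ then forces $|\lambda_j|=\sqrt n$ for every $j$, so $|[\tilde F^* \tilde C]_{jk}|=1/\sqrt n$ for all $j,k$, as required.

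There is no serious obstacle; the crux is simply to notice where the assumption $d=1$ enters, namely only through $|\lambda_j|=\sqrt{d^2+n-1}=\sqrt n$. Once the common modulus of all Fourier eigenvalues of $C$ is pinned at $\sqrt n$, the MU property of $\tilde F$ with $\tilde C$ is automatic from circulancy and unitarity of $\tilde F$, and the proposition follows.
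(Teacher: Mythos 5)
Your proof is correct and follows essentially the same route as the paper's: the two pairs involving $I$ are handled by the entrywise modulus $1/\sqrt{n}$, and the pair $(F,C)$ by the fact that $F$ diagonalizes the circulant $C$, whose eigenvalues all have modulus $\sqrt{d^2+n-1}=\sqrt{n}$. You merely spell out the identity $\tilde F^*\tilde C=\frac{1}{\sqrt{n}}\Lambda\tilde F^*$ that the paper leaves implicit.
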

\begin{proof}
Identity matrix $I$ is MUB to both matrices $F$ and $C$ of order $n$ because every entry of these two matrices has the same amplitude $1/\sqrt{n}$. Also, eigenvalues of $C$ obey Eq.(\ref{eigenvalues}), which immediately imply that $F$ and $C$ are MUB. This is so because $C$ is a unitary matrix, so it has $n$ unimodular complex eigenvalues.
\end{proof}\bigskip

Let us illustrate the above result with the explicit solution for a maximal set of MUB in dimensions $d=2$ and $d=3$, where three and four MUB exist, respectively:
\begin{equation}
I=\left(\begin{array}{ccc}
1&0\\
0&1\\
\end{array}\right)\hspace{0.3cm}
F=\frac{1}{\sqrt{2}}\left(\begin{array}{ccc}
1&1\\
1&-1\\
\end{array}\right)\hspace{0.3cm}
C=\frac{1}{\sqrt{2}}\left(\begin{array}{ccc}
1&i\\
i&1
\end{array}\right)\hspace{0.3cm}
\end{equation}
and
\begin{equation}
I=\left(\begin{array}{ccc}
1&0&0\\
0&1&0\\
0&0&1
\end{array}\right)\hspace{0.3cm}
F=\frac{1}{\sqrt{3}}\left(\begin{array}{ccc}
1&1&1\\
1&\omega&\omega^2\\
1&\omega^2&\omega
\end{array}\right)\hspace{0.3cm}
C_1=\frac{1}{\sqrt{3}}\left(\begin{array}{ccc}
\omega&1&1\\
1&\omega&1\\
1&1&\omega
\end{array}\right)\hspace{0.3cm}
C_2=\frac{1}{\sqrt{3}}\left(\begin{array}{ccc}
\omega^2&1&1\\
1&\omega^2&1\\
1&1&\omega^2
\end{array}\right)
\end{equation}\bigskip

Le us mention that the discrete Fourier transform $F$ of prime order $n$ is equivalent to a circulant matrix $C$ satisfying conditions \eqref{Conditions} with $d=1$ \cite{B89,F01}. Here, we consider the following notion of equivalence: two matrices $A$ and $B$ are equivalent if there exists diagonal unitary matrices $D_1,D_2$ and permutation matrices $P_1,P_2$ such that $A=D_1P_1BP_2D_2$. Furthermore, any circulant matrix $C$ satisfying conditions \eqref{Conditions}, with $d=1$ and prime order $n$, is equivalent to $F$  (cf. Theorem 1.2 in Ref. \cite{HS16}).  \bigskip

\section{Acknowledgements}  
DG and DU kindly acknowledge support from Grant FONDECYT Iniciaci\'{o}n number 11180474, Chile.\\ 
DU acknowledges support from the project ANT1856, Universidad de Antofagasta.\\
OT acknowledges support from the Czech Science Foundation (GA\v{C}R) within the project 17-01706S.

\appendix

\appendixpage

\section{Computer simulations}\label{appendix}

In this section we present all combinations of $n$ and $d$ with $n=2,\ldots,22$ and $d\neq\frac{n}{2}-1$ such that a circulant matrix $C$ satisfying conditions \eqref{Conditions} exists. They were obtained using the algorithm described in Section~\ref{odd orders}. Each such pair $(n,d)$ is supplemented with an example of a generator of $C$. Notice that the results of the simulations disprove the existence of $C$ with $d\neq\frac{n}{2}-1$ for all even values $n\leq22$, in particular for $n=16$ (discussed in Example~\ref{example even n}).

\begin{itemize}

\item $n = 7$, $d = \frac{1}{2\sqrt{2}}$\\
\vspace{-0.2cm}
\begin{equation}
\begin{split}
& \biggl( \frac{1}{2\sqrt{2}}, 0.833289 - 0.552838 i, -0.724402 - 0.689378 i, 0.951773 - 0.306802 i ,  \\
 &  0.951773 + 0.306802 i, -0.724402 + 0.689378 i, 0.833289 + 0.552838 i \biggr) \nonumber
\end{split}
\end{equation}

\item $n = 11$, $d = \frac{1}{2\sqrt{3}}$\\
\vspace{-0.2cm}
\begin{equation}
\begin{split}
& \biggl( \frac{1}{2\sqrt{3}}, -0.00724338 - 0.999974 i, 0.760473 - 0.649369 i, 0.534533 - 0.845148 i,  \\ 
 &-0.750986 + 0.660318 i, 0.906599 - 0.421993 i, 0.906599 + 0.421993 i, -0.750986 - 0.660318 i,  \\ 
 &0.534533 + 0.845148 i, 0.760473 + 0.649369 i, -0.00724338 + 0.999974 i \biggr) \nonumber
\end{split}
\end{equation}

\item $n = 13$, $d = \frac{5}{2\sqrt{3}}$\\
\vspace{-0.2cm}
\begin{equation}
\begin{split}
 &\biggl( \frac{5}{2\sqrt{3}}, 0.153536 - 0.988143 i, -0.704051 - 0.710149 i, 0.595162 - 0.803606 i, \\ 
 & 0.869485 - 0.493959 i, 0.0560739 + 0.998427 i, 0.184495 - 0.982834 i, 0.184495 +  0.982834 i,  \\ 
 & 0.0560739 - 0.998427 i,0.869485 + 0.493959 i, 0.595162 + 0.803606 i, -0.704051 + 0.710149 i, \\
 & 0.153536 + 0.988143 i \biggr) \nonumber
\end{split}
\end{equation} 

\item $n = 15$, $d = \frac{1}{4}$\\
\vspace{-0.2cm}
\begin{equation}
\begin{split}
 &\biggl( \frac{1}{4}, 0.989074 + 0.147421 i, 0.0432273 - 0.999065 i, -0.309017 - 0.951057 i, \\ 
 & 0.165435 + 0.986221 i, -0.5 - 0.866025 i, 0.809017 - 0.587785 i, 0.552264 - 0.833669 i,  \\ 
 &0.552264 + 0.833669 i, 0.809017 + 0.587785 i, -0.5 +  0.866025 i, 0.165435 - 0.986221 i, \\
 & -0.309017 + 0.951057 i, 0.0432273 + 0.999065 i, 0.989074 - 0.147421 i \biggr) \nonumber
\end{split}
\end{equation}

\item $n = 19$, $d = \frac{1}{2\sqrt{5}}$\\
\vspace{-0.2cm}
\begin{equation}
\begin{split}
 &\biggl( \frac{1}{2\sqrt{5}}, 0.999747 - 0.0225052 i, -0.660552 - 0.75078 i, 0.56565 - 0.824645 i, \\ 
 & -0.693668 - 0.720295 i, 0.527969 - 0.849264 i, 0.952885 + 0.303331 i, -0.0601301 + 0.998191 i,  \\ 
 & 0.802764 - 0.596297 i, -0.422203 - 0.906501 i, -0.422203 + 0.906501 i, 0.802764 + 0.596297 i, \\
 & -0.0601301 - 0.998191 i, 0.952885 - 0.303331 i, 0.527969 + 0.849264 i, -0.693668 + 0.720295 i,  \\ 
 & 0.56565 + 0.824645 i, -0.660552 + 0.75078 i, 0.999747 + 0.0225052 i \biggr) \nonumber
\end{split}
\end{equation}

\item $n = 21$, $d = \frac{11}{4}$\\
\vspace{-0.2cm}
\begin{equation}
\begin{split}
 &\biggl(  \frac{11}{4}, -0.643041 + 0.765832 i, 0.521717 - 0.853118 i, 0.38874 - 0.921348 i, \\ 
 &  0.247078 - 0.968996 i, -0.999681 + 0.0252613 i, 0.0495156 + 0.998773 i, 0.5 - 0.866025 i, \\ 
 & -0.341709 - 0.939806 i, 0.811745 - 0.584012 i, 0.715636 - 0.698474 i, 0.715636 + 0.698474 i, \\
 & 0.811745 + 0.584012 i, -0.341709 + 0.939806 i, 0.5 +  0.866025 i, 0.0495156 - 0.998773 i,  \\ 
 &-0.999681 - 0.0252613 i, 0.247078 + 0.968996 i, 0.38874 + 0.921348 i, 0.521717 + 0.853118 i, \\
 & -0.643041 - 0.765832 i \biggr) \nonumber
\end{split}
\end{equation}

\end{itemize}


\end{document}